\newcommand\ord{\operatorname{ord}}
\newcommand\Tr{\operatorname{Tr}}
\newcommand\Vol{\operatorname{Vol}}
\newcommand\card{\operatorname{card}}
\numberwithin{equation}{section}
\newtheorem{Definition}{Definition}[section]
\newtheorem{Theorem}[Definition]{Theorem}
\newtheorem{Lemma}[Definition]{Lemma}
\newtheorem{Proposition}[Definition]{Proposition}
\begin{document}

\title{Robba's method on Exponential sums}
\author{Peigen Li}
\address{Yau Mathematical Sciences Center, Tsinghua University, Beijing 100084, P. R. China}
\email{lpg16@mails.tsinghua.edu.cn}
\begin{abstract}
     In this article, we use Robba's method to give an estimate of the Newton polygon for the $L$-function and we can draw the Newton polygon in some special cases.
\end{abstract}
\maketitle


\section{Introduction}
The basic objects of this study are exponential sums on a torus of dimension $n$ defined over a finite field $k$ with $\textrm{char}(k)=p$. Our methods are based on the work of Dwork, Adolphson and Sperber. In \cite{robba}, Robba gives an explicit calculation of one variable twisted exponential sums. In fact, his method can be applied to the case of multi-variables. 

Let $\zeta_p$ be a primitive $p$-th root of unity. Let $\psi$ be the additive character of $k$ given by $\psi(t)=\zeta_p^{\Tr_{k/\mathbf{F}_p}(t)}$. Let $f$ be a Laurent polynomial and write 
$$
f=\sum_{i=1}^Na_ix^{w_i}\in k[x_1,\cdots,x_n,x_1^{-1},\cdots,x_n^{-1}].
$$ We assume that $a_i\neq 0$ for all $i$. Define exponential sums 
$$
S_i(f)=\sum_{x\in \mathbf{T}^n(k_i)}\psi(\Tr_{k_i/k}(f(x))),
$$ where $k_i$ are the extensions of $k$ of degree $i$. The $L$-function is defined by 
$$
L(f,t)=\exp\Big(\sum_{i=1}^{\infty}S_i(f)t^i/i\Big).
$$

In \cite[section 2]{Ado1989}, Adolphson and Sperber use 
Dwork's method to prove that $L(f,t)^{(-1)^{n-1}}$ is a polynomial when $f$ is nondegenerate. Moreover, they give a low bound of the Newton polygon of $L(f,t)^{(-1)^{n-1}}$ in \cite[section 3]{Ado1989}, which we call Hodge polygon in this article. In our study, we want to give a more precise result about the Newton polygon when $f$ has only $n$ terms, that is $N=n$. Note that if we assume that $J=(w_1,\cdots,w_n)$ is invertible in $\mathbf{M}_n(\mathbf{R})$, we can found a solution $b=(b_1,\cdots,b_n)\in \bar{k}^{\times}$ such that $a_ib^{w_i}=1$ for all $i$. From now on, we assume that $(p,\det J)=1$, $k=\mathbf{F}_p$ and 
$$
f=\sum_{i=1}^nx^{w_i}.
$$

Let $\Delta(f)$ be the Newton polyhedron at $\infty$ of $f$ which is defined to be the convex hull in $\mathbf{R}^n$ of the set $\left\{w_j\right\}_{j=1}^n\cup \left\{(0,\cdots,0)\right\}$ and let $C(f)$ be the convex cone generated by $\left\{w_j\right\}_{j=1}^n$ in $\mathbf{R}^n$. Let $\Vol(\Delta(f))$ be the volume of $\Delta(f)$ with respect to Lebesgue measure on $\mathbf{R}^n$. 
We say $f$ is \textit{nondegenerate with respect to $\Delta(f)$} if for any face $\sigma$ of $\Delta(f)$ not containing the origin, the Laurent polynomials $\frac{\partial f_{\sigma}}{\partial x_i}$, $i=1,\cdots,n$ 
have no common zero in $(\bar{k}^{\times})^n$, where $f_{\sigma}=\sum_{w_j\in \sigma}a_jx^{w_j}$. Set $M(f)=C(f)\cap \mathbf{Z}^n$. Note that $(p,\det J)=1$ implies that $f$ is nondegenerate. Since we have assumed that $J$ is invertible, any element $u\in M(f)$ can be uniquely written 
\begin{equation}\label{express}
u=\sum_{i=1}^nr_iw_i.
\end{equation}We define a weight on $M(f)$
$$
w(u):=\sum_{i=1}^nr_i.
$$
Note that the set of all elements $u\in M(f)$ such that all $0\leq r_i<1$ 
in the expression (\ref{express}) form a fundamental domain of the lattice $M(f)$. Denote it by $S(\Delta)$. Note that $\card(S(\Delta))=n!\Vol{\Delta(f)}=\det(J)$ and $(p,\det J)=1$ imply that 
$S(\Delta)$ has a natural $p$-action. For any $u=r_1w_1+\cdots+r_nw_n\in S(\Delta)$, define 
$$
p.u=\sum_{i=1}^n\left\{pr_i\right\}w_i,
$$ where $\left\{pr_i\right\}$ is the fractional part of $pr_i$ for each $i$. We say $S(\Delta)$ is \textit{$p$-stable under weight function} if $w(u)=w(p.u)$ for any $u\in S(\Delta)$. Now we give our main result.
\begin{Theorem}
    Suppose that $f=x^{w_1}+\cdots+x^{w_n}$ with $w_i\in\mathbf{Z}^n$ and $(p,\det J)=1$. The Newton polygon of $L(f,t)^{(-1)^{n-1}}$ coincides with the Hodge polygon of $\Delta(f)$ if and only if $S(\Delta)$ is $p$-stable under weight function.
\end{Theorem}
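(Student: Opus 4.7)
The plan is to realize $L(f,t)^{(-1)^{n-1}}$ as $\det(1-t\alpha\mid H)$ for a Dwork--Robba Frobenius operator $\alpha$ on a finite-dimensional $p$-adic space $H$ of dimension $\det J=\card(S(\Delta))$, following the Adolphson--Sperber cohomological framework. In this setup $H$ comes with a distinguished basis $\{e_u\}_{u\in S(\Delta)}$ coming from the weight filtration, the Hodge polygon of $\Delta(f)$ has slope multiset $\{w(u):u\in S(\Delta)\}$ by definition, and the Adolphson--Sperber lower bound Newton $\ge$ Hodge is equivalent to $\ord_p(A_{v,u})\ge w(v)$ for the matrix $A$ of $\alpha$ in this basis.

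The core step is an explicit description of $A$ via the Dwork splitting function $F(x)=\prod_{i=1}^n\theta(x^{w_i})$, with $\theta(x)=\exp(\pi x-\pi x^p)=\sum a_mx^m$ and $\pi^{p-1}=-p$. Expanding $\alpha(x^u)=\sum_v c_{pv-u}x^v$ with $c_y=\prod_i a_{m_i}$ (unique decomposition $y=\sum m_iw_i$, from the invertibility of $J$), one checks that for $u\in S(\Delta)$ the only $v\in S(\Delta)$ with a nonzero direct contribution is $v=p.u$, arising from the unique tuple $(m_i)\in\{0,\ldots,p-1\}^n$ solving $Jm\equiv-u\pmod p$; the Dwork--Artin--Hasse equality $\ord_\pi(a_m)=m$ for $0\le m<p$ then gives $\ord_p(A_{p.u,u})=w(p.u)$ with unit leading coefficient. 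The contributions at $v\in M(f)\setminus S(\Delta)$ are folded back into $H$ using the Koszul relations from $\nabla_i=\partial_i+\pi\partial_if$; tracking the extra $\pi$-factors produced at each reduction step yields a decomposition $A=\Pi D+R$, in which $\Pi$ is the permutation matrix of $u\mapsto p.u$ on $S(\Delta)$, $D$ is diagonal with $\ord_p(D_{u,u})=w(p.u)$ and unit leading coefficient, and every nonzero entry of $R$ has $\ord_p$ strictly greater than $w(v)$.

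With this structure the theorem reduces to an orbit-wise analysis. Decompose $S(\Delta)$ into $p$-orbits $\mathcal{O}=\{u_0,\ldots,u_{d-1}\}$. On each orbit, $\det(A|_{\mathcal{O}})$ has $\ord_p=\sum_iw(u_i)$, contributed entirely by the $\Pi D$ part. For $0<k<d$, no nonempty proper subset of $\mathcal{O}$ is $\Pi$-stable, so the intermediate coefficients $c_{d-k}$ of the characteristic polynomial of $A|_{\mathcal{O}}$ receive contributions only from principal minors containing at least one $R$-factor, whence the strict bound gives $\ord_p(c_{d-k})>w_{(1)}+\cdots+w_{(k)}$ (the Hodge partial sum on $\mathcal{O}$, using the sorted weights $w_{(1)}\le\cdots\le w_{(d)}$). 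If $w$ is constant on $\mathcal{O}$ the Hodge polygon is the straight segment from $(0,0)$ to $(d,\sum_iw(u_i))$ and Newton must coincide with it; if $w$ is not constant on $\mathcal{O}$, the Hodge polygon is strictly convex there, and a short convexity argument (any segment joining two points strictly above the Hodge polygon lies strictly above Hodge at every interior $k$) shows that Newton strictly exceeds Hodge at some interior $k$. Summing over orbits yields the iff.

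The main obstacle is establishing the strict valuation bound $\ord_p(R_{v,u})>w(v)$ for the Koszul-reduced part of the matrix. This requires a careful analysis of how the operators $\nabla_i=\partial_i+\pi\partial_if$ propagate $\pi$-factors when reducing monomials $x^v$ with $v\in M(f)\setminus S(\Delta)$ back into $H$, using the nondegeneracy of $f$ (supplied by $(p,\det J)=1$) to ensure that the reduction terminates with the required strictness. Once this is in place, the orbit analysis and convexity step are essentially routine.
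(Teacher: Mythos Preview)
Your overall strategy matches the paper's: realize $L^{(-1)^{n-1}}$ as $\det(1-t\alpha)$ on the Adolphson--Sperber cohomology with basis $\{x^u\}_{u\in S(\Delta)}$, identify the permutation structure coming from the $p$-action on $S(\Delta)$, and compare orbit by orbit. The essential difference is that the paper proves there is \emph{no} remainder term. Using the coset decomposition $B_0=\bigoplus_{u'\in S(\Delta)}R_{u'}$ together with $\hat D_i(R_{u'})\subset R_{u'}$, the reduction of any monomial $x^v$ lands on the single representative $u'$ with $v\in S_{u'}$; combined with the support constraint on $h_{pv-u}$ this forces $\gamma(u,u')=0$ unless $p.u'=u$. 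Hence the matrix of $\alpha$ is exactly monomial and $\det(1-t\alpha)=\prod_{\text{orbits}}(1-t^d\lambda_u)$ on the nose, after which the comparison with $\prod_u(1-tp^{w(u)})$ is immediate.

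As written your argument has a gap precisely at the orbit step. A strict bound $\ord(R_{v,u})>w(v)$ does not by itself make $A=\Pi D+R$ block-diagonal along $p$-orbits, so $\det(1-tA)$ is not the product of the per-orbit characteristic polynomials and ``summing over orbits yields the iff'' is unjustified. In fact the Koszul reduction you invoke \emph{does} respect the coset structure (this is exactly the content of the paper's lemma on $A(u,u')$), so a more careful analysis would show that your $R$ is supported on the same permutation pattern as $\Pi D$ and the matrix is genuinely block-diagonal; but that is the observation you are missing, and without it the convexity argument on $A|_{\mathcal O}$ does not control the global Newton polygon.

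Two minor corrections. The permutation goes the other way: the nonzero entry in column $u$ sits at row $u'$ with $p.u'=u$, not at row $p.u$. Its valuation is $\dfrac{pw(u')-w(u)}{p-1}$ rather than $w(p.u)$; the product around an orbit still telescopes to $\sum_{i}w(p^i.u)$, so the endpoint is unaffected, but the individual entries are not what you wrote.
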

Wan uses the Gauss sum to give an explicit formula of the $L$-function for the diagonal Laurent polynomial. Then he uses Stickelberger's theorem to give a proof of above theorem. See \cite[Theorem 3.4]{Wan}. In this article, we use Robba's method to prove above theorem. Indeed, Robba's method can also be applied to prove \cite[Theorem 3.10]{Ado1989} and it is easier than the method used in \cite[\S 3]{Ado1989}.
\section{p-adic estimates}
Let $\mathbf{Q}_p$ be the $p$-adic numbers. Let $\Omega$ be the completion of the algebraic closure of $\mathbf{Q}_p$. Denote by ``ord" the additive valuation on $\Omega$ normalized by $\ord(p)=1$. The norm on $\Omega$ is given by $|u|=p^{-\ord(u)}$ for any $u\in \Omega$. 

Note that there is an integer $M$ such that $w(M(f))\subset \frac{1}{M}\mathbf{Z}$. In \cite[section 1]{Ado1989},  Adolphson and Sperber introduce a filtration on $R(f):=k[x^{M(f)}]$ given by
\begin{displaymath}
R(f)_{i/M}=\left\{\sum_{u\in M(f)}b_ux^u|w(u)\leq i/M~\textrm{for all $u$ with}~b_u\neq 0\right\}.
\end{displaymath}
The associated graded ring is
\begin{displaymath}
\bar{R}=\bigoplus_{i\in \mathbf{Z}_{\geq 0}}\bar{R}^{i/M},
\end{displaymath}
where
\begin{displaymath}
\bar{R}^{i/M}=R(f)_{i/M}/R(f)_{(i-1)/M}.
\end{displaymath}
For $1\leq i\leq n$, let $\bar{f}_i$ be the image of $x_i\frac{\partial f}{\partial x_i}\in R(f)_1$ in $\in \bar{R}^{1}$. Let $\bar{I}$ be the ideal generated by $\bar{f}_1,\dots,\bar{f}_n$ in $\bar{R}$. By \cite[Theorem 2.14]{Ado1989} and \cite[Theorem 2.18]{Ado1989}, $\bar{f}_1,\dots,\bar{f}_n$ in $\bar{R}$ form a regular sequence in $\bar{R}$ and $\dim_k\bar{R}/\bar{I}=n!\Vol(\Delta(f))$. For each integer $i$, we have a decomposition
\begin{equation}\label{Decompo}
\bar{R}^{i/M}=\bar{V}^{i/M}\oplus (\bar{R}^{i/M}\cap \bar{I}).
\end{equation}
Set $a_i$=$\dim_{k}\bar{V}^{i/M}$. 

For a non-negative integer $l$, set
\begin{displaymath}
W(l)=\card\left\{u\in M(f)|w(u)=\frac{l}{M}\right\}.
\end{displaymath}
Note that this is a finite number for each $l$. Set
\begin{displaymath}
H(i)=\sum_{l=0}^n(-1)^l\binom{n}{l}W(i-l M).
\end{displaymath}
\begin{Lemma}\label{Hodge-index}
With the notation above. Suppose that $f$ is nondegenerate. Then $H(i)=a_i$ for all integer $i\geq 0$. Moreover, we have
\begin{displaymath}
H(k)=0~\mathrm{for} ~k>nM,\quad
\sum_{k=0}^{nM}H(k)=n!\Vol(\Delta(f)).
\end{displaymath}
\end{Lemma}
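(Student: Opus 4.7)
The plan is to prove $a_i=H(i)$ via the Koszul complex associated to the regular sequence $\bar{f}_1,\ldots,\bar{f}_n$, and then to handle the vanishing of $H(k)$ for large $k$ and the total sum via a generating-function identity reflecting the tiling of $M(f)$ by translates of the fundamental domain $S(\Delta)$.

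First I would observe that each $\bar{f}_i$ is homogeneous of weight $1$, i.e.\ of degree $M$ in the $\mathbf{Z}$-grading on $\bar{R}$ where $\bar{R}^{i/M}$ sits in degree $i$. By the cited results of Adolphson--Sperber, these elements form a regular sequence, so the Koszul complex
\begin{equation*}
0\to \bar{R}(-nM)\to\cdots\to \bar{R}(-M)^n\to \bar{R}\to \bar{R}/\bar{I}\to 0
\end{equation*}
is an exact graded free resolution, where $\bar{R}(-jM)$ denotes the shift with degree-$i$ piece $\bar{R}^{(i-jM)/M}$. Since $\bar{I}$ is generated by homogeneous elements, it is a graded ideal, so the decomposition (\ref{Decompo}) identifies $a_i=\dim_k(\bar{R}/\bar{I})^{i/M}$; computing this via the Euler characteristic of the degree-$i$ piece of the resolution gives
\begin{equation*}
a_i=\sum_{l=0}^n(-1)^l\binom{n}{l}\dim_k\bar{R}^{(i-lM)/M}=\sum_{l=0}^n(-1)^l\binom{n}{l}W(i-lM)=H(i).
\end{equation*}

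Second, for the remaining two claims, I would establish that every $u\in M(f)$ admits a unique decomposition $u=s+\sum_{i=1}^n n_iw_i$ with $s\in S(\Delta)$ and $n_i\in \mathbf{Z}_{\geq 0}$. Writing $u=\sum r_iw_i$ as in (\ref{express}), the choice $n_i=\lfloor r_i\rfloor$, $s=\sum\{r_i\}w_i$ works, and $s\in S(\Delta)$ follows because $s=u-\sum n_iw_i\in \mathbf{Z}^n\cap C(f)$. Since $w(w_i)=1$ and $w$ is additive in the $w_i$-expansion, this yields the generating-function identity
\begin{equation*}
\sum_{l\geq 0}W(l)T^l=\frac{Q(T)}{(1-T^M)^n},\qquad Q(T):=\sum_{s\in S(\Delta)}T^{Mw(s)}.
\end{equation*}
Multiplying by $(1-T^M)^n$ identifies $H(k)$ with the coefficient of $T^k$ in $Q(T)$. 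Since $0\leq Mw(s)<nM$ for every $s\in S(\Delta)$, this gives $H(k)=0$ for $k\geq nM$ and $\sum_{k=0}^{nM}H(k)=Q(1)=\card(S(\Delta))=n!\Vol(\Delta(f))$.

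The main obstacle is purely one of bookkeeping: making the shifts in the Koszul resolution consistent with the $\frac{1}{M}\mathbf{Z}$-grading, and verifying that $\bar{I}$ is a graded ideal (automatic from homogeneity of the $\bar{f}_i$). Once the Koszul resolution and the tiling identity for $M(f)$ are in place, both $a_i=H(i)$ and the two numerical statements follow formally with no further input.
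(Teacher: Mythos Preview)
Your argument is correct. For the identity $a_i=H(i)$, your Koszul-complex computation is essentially the same as the paper's: the paper invokes the Poincar\'e-series identity $P_{\bar R/\bar I}(t)=P_{\bar R}(t)(1-t^M)^n$ for a regular sequence of degree-$M$ elements, which is exactly the Euler-characteristic count you extract from the graded Koszul resolution.

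Where you genuinely diverge is in the second half. The paper simply cites Kouchnirenko's lemma for the vanishing of $H(k)$ for $k>nM$ and for $\sum_k H(k)=n!\Vol(\Delta(f))$. You instead give a direct, self-contained proof via the tiling $M(f)=\bigsqcup_{s\in S(\Delta)}\big(s+\sum_i \mathbf{Z}_{\ge 0}\,w_i\big)$, which yields the generating-function identity $\sum_l W(l)T^l=Q(T)/(1-T^M)^n$ and hence identifies $H(k)$ with the $T^k$-coefficient of $Q(T)$. This is more elementary and even slightly sharper (you get $H(k)=0$ already for $k\ge nM$), but it depends on the simplicial structure coming from the standing hypothesis that $J=(w_1,\dots,w_n)$ is invertible; Kouchnirenko's lemma, by contrast, covers general Newton polyhedra. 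In the present paper's setting your approach is entirely adequate and has the advantage of avoiding an external reference.
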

\begin{proof}
By \cite[Theorem 2.14]{Ado1989}, $\left\{\bar{f}_i\right\}_{i=1}^n$ form a regular sequence in $\bar{R}$. So
$$P_{\bar{R}/\bar{I}}(t)=P_{\bar{R}}(t)(1-t^M)^n,$$
where $P_{\bar{R}/\bar{I}}$ (resp. $P_{\bar{R}}$) is the Poincar\'{e} series of $\bar{R}/\bar{I}$ (resp. $\bar{R}$). On the other hand, we have
\begin{displaymath}
P_{\bar{R}/(\bar{f}_1,\dots,\bar{f}_n)}=\sum_{i=0}^{\infty}a_it^i,~P_{\bar{R}}(t)=\sum_{i=0}^{\infty}W(i)t^i.
\end{displaymath}
Hence
\begin{displaymath}
a_i=\sum_{l=0}^n(-1)^l\binom{n}{l}W(i-l M)=H(i).
\end{displaymath}
The second assertion follows from \cite[Lemma 2.9]{kouchnirenko1976polyedres}.
\end{proof}
Note that $\bar{R}/\bar{I}$ has a finite basis $S=\left\{x^{u}|u\in S(\Delta)\right\}$ and $\card(S)=n!\Vol(\Delta(f))$.
\begin{Definition}
The Hodge polygon $HP(\Delta)$ of $\Delta(f)$ is defined to be the convex polygon in $\mathbf{R}^2$ with vertices $(0,0)$ and
\begin{displaymath}
\Big(\sum_{k=0}^mH(k),\frac{1}{M}\sum_{k=0}^mkH(k)\Big).
\end{displaymath}
\end{Definition}

Consider the Artin-Hasse exponential series: 
$
E(t)=\exp\Big(\sum_{i=0}^{\infty}\frac{t^{p^i}}{p^i}\Big).
$
By \cite[Lemma 4.1]{dwork1}, the series $\sum_{i=0}^{\infty}\frac{t^{p^i}}{p^i}$ has a zero at $\gamma\in \Omega$ such that $\ord\gamma=1/(p-1)$ and $\zeta_p\equiv 1+\gamma\mod\gamma^2$. Set
$$
\theta(t)=E(\gamma t)=\sum_{i=0}^{\infty}c_i t^i.
$$ The series $\theta(t)$ is a splitting function in Dwork's terminology \cite[\S 4a]{dwork1}. In particular, we have $\ord c_i\geq i/(p-1)$, $\theta(t)\in \mathbf{Q}_p(\zeta_p)[[t]]$ and $\theta(1)=\zeta_p$.
Fix an $M$-th root $\widetilde{\gamma}$ of $\gamma$ in $\Omega$. Let $K=\mathbf{Q}_p(\widetilde{\gamma})$, and $\mathcal{O}_K$ the ring of integers of $K$. Let $\hat{a}_j\in K$ be the Techm\"uller lifting of $a_j$ and set 
$$\hat{f}(x)=\sum_{j=1}^N\hat{a}_jx^{\omega_j}\in K[x_1,x_1^{-1},\cdots,x_n,x_n^{-1}].$$
Consider the following spaces of $p$-adic functions
$$
B_0=\left\{\sum_{u\in M(f)}A_u\widetilde{\gamma}^{Mw(u)}x^u|A_u\in \mathcal{O}_K,A_u\rightarrow 0 ~\textrm{as} ~u\rightarrow 0\right\},
$$
$$
B=\left\{\sum_{u\in M(f)}A_u\widetilde{\gamma}^{Mw(u)}x^u|A_u\in K,A_u\rightarrow 0 ~\textrm{as} ~u\rightarrow 0\right\}.
$$
Set $\gamma_l=\sum\limits_{i=0}^{l}\gamma^{p^i}/p^i,h(t)=\sum\limits_{l=0}^{\infty}\gamma_lt^{p^l}$. 
Define 
$$
H(x)=\sum_{j=1}^nh(x^{w_j}),~ F_0(x)=\prod_{i=1}^n\theta(x^{w_i})=\sum_{v\in M(f)}h_vx^v.
$$ Define an operator $\psi$ on formal Laurent series by
\begin{displaymath}
\psi(\sum_{u\in \mathbf{Z}^n}a_ux^u)=\sum_{u\in\mathbf{Z}^n}a_{pu}x^u.
\end{displaymath}
Let $\alpha=\psi \circ F_0(x)$. For $i=1,\cdots,n$, define operators 
$$
E_i=x_i\partial/\partial x_i,~\hat{D}_i=E_i+E_i(H)
$$
By \cite[Corollary 2.9]{Ado1989}, we have 
\begin{eqnarray*}
L(f,t)^{(-1)^{n-1}}=\det(1-t\alpha|B/\sum_{i=1}^n\hat{D}_iB).
\end{eqnarray*}
By \cite[Therorem 2.18, Theorem A.1]{Ado1989}, $S=\left\{x^u\right\}_{u\in S(\Delta)}$ is a free basis of $B/\sum_{i=1}^n\hat{D}_iB$. For any $u\in M(f), u'\in S(\Delta)$, define $A(u,u')$ by the relations
\begin{displaymath}
x^{u}\equiv\sum_{u'\in S(\Delta)}A(u,u')x^{u'}\mod\sum_{i=1}^n\hat{D}_iB.
\end{displaymath}
For any $u, u'\in S(\Delta)$, define $\gamma(u,u')$ by the relations
\begin{displaymath}
\alpha(x^{u})\equiv\sum_{u'\in S(\Delta)}\gamma(u,u')x^{u'}\mod\sum_{i=1}^n\hat{D}_iB.
\end{displaymath}
The main purpose is to give estimate for the $p$-adic valuations of the coefficients $\gamma(u,u')$. 

For any $u\in M(f)$, there is a unique $u'\in S(\Delta)$ such that 
$$
u\in S_{u'}=\left\{u'+\sum_{i=1}^n\mathbf{Z}_{\geq 0}w_i\right\}. 
$$ Set $R_{u'}=\left\{\xi=\sum a_ux^u\in B_0|u\in S_{u'}\right\}$.
\begin{Lemma}\label{est-lemma2}
For any $u\in M(f)$, we have $A(u,u')=0$ if $u\notin S_{u'}$, $\ord(A(u,u'))\geq \frac{w(u')-w(u)}{p-1}$ if $u\in S_{u'}$.
\end{Lemma}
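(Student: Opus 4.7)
The strategy is to prove both assertions via a single reduction argument that exploits the integral structure on $B_0$ provided by the hypothesis $(p,\det J) = 1$.

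For the vanishing claim, I first observe that each $\hat{D}_i$ preserves the subspace $R_{u'}$. Indeed, the direct computation
$$\hat{D}_i(x^v) = v_i x^v + \sum_{j=1}^n \sum_{l \geq 0} (w_j)_i \gamma_l p^l \, x^{v + p^l w_j}$$
shows that every exponent shift is by a non-negative integer combination of the $w_j$'s, so each $S_{u'}$ is invariant under all such shifts. Since $M(f) = \bigsqcup_{u' \in S(\Delta)} S_{u'}$, we obtain $B = \bigoplus_{u'} R_{u'}$ as a topological direct sum respected by the $\hat{D}_i$'s. The reduction of $x^u \in R_{u'}$ modulo $\sum_i \hat{D}_i B$ therefore lies in $R_{u'}$, and since $S(\Delta) \cap S_{u'} = \{u'\}$, only $x^{u'}$ can appear among the basis elements. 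Hence $A(u,u'') = 0$ whenever $u'' \neq u'$.

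For the valuation estimate on $A(u,u')$ when $u \in S_{u'}$, I would introduce modified operators $\tilde{D}_k := \sum_i (J^{-1})_{ik} \hat{D}_i$, whose coefficients lie in $\mathbf{Z}_p$ thanks to $(p, \det J) = 1$. A direct calculation using $\sum_i (J^{-1})_{ik}(w_j)_i = \delta_{jk}$ yields
$$\tilde{D}_k(x^v) = r_k(v) x^v + \sum_{l \geq 0} \gamma_l p^l \, x^{v + p^l w_k},$$
where $r_k(v)$ denotes the $k$-th coefficient in the expansion $v = \sum r_i(v) w_i$. Given $u \in S_{u'}$ with $u \neq u'$, choose $k$ so that $c_k \geq 1$ in $u = u' + \sum c_i w_i$, ensuring $u - w_k \in S_{u'}$ and $r_k(u - w_k) = r_k(u) - 1$. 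Applying $\tilde{D}_k$ to $x^{u-w_k}$ and isolating the $l = 0$ contribution $\gamma x^u$ yields, modulo $\sum_i \hat{D}_i B$,
$$\gamma x^u \equiv -(r_k(u) - 1) x^{u-w_k} - \sum_{l \geq 1} \gamma_l p^l \, x^{u + (p^l - 1)w_k}.$$
Rewriting in the normalized basis $\tilde{x}^v := \widetilde{\gamma}^{Mw(v)} x^v$ absorbs the factor $\gamma$ via the weight shift, producing integral coefficients: $-(r_k(u) - 1) \in \mathcal{O}_K$ because $r_k(u) \in (\det J)^{-1}\mathbf{Z}$ is a $p$-adic integer, and $\ord(\gamma_l p^l/\gamma^{p^l}) \geq p^l - 1 \geq 1$ follows from the Artin--Hasse estimate $\ord \gamma_l \geq p^{l+1}/(p-1) - (l+1)$ (obtained from $\gamma_l = -\sum_{i>l}\gamma^{p^i}/p^i$).

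This integral identity defines an $\mathcal{O}_K$-linear operator $T : B_0 \cap R_{u'} \to B_0 \cap R_{u'}$ satisfying $T(\xi) \equiv \xi \pmod{\sum_i \hat{D}_i B}$, $T(\tilde{x}^{u'}) = \tilde{x}^{u'}$ and $\|T\| \leq 1$. The iterates $T^n(\tilde{x}^u)$ should converge $p$-adically to some $c \tilde{x}^{u'}$ with $c \in \mathcal{O}_K$, which will give $A(u,u') = c \widetilde{\gamma}^{M(w(u') - w(u))}$ and hence the desired bound $\ord A(u,u') \geq (w(u') - w(u))/(p-1)$. The main obstacle is making this convergence rigorous. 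Decomposing $T = T_1 + T_2$, where $T_1$ retains the ``downward'' summand $-(r_k(u)-1) \tilde{x}^{u-w_k}$ and $T_2$ gathers the ``upward'' corrections with image in $p \cdot (B_0 \cap R_{u'})$, one notes that $T_1$ strictly decreases weight by $1$ within $S_{u'}$ and so stabilizes on $\mathcal{O}_K \tilde{x}^{u'}$ after $N = w(u) - w(u')$ steps. The convergence of $T^n(\tilde{x}^u)$ is then obtained by successive $p$-adic approximation: modulo $p$ only $T_1$ acts, so the iteration stabilizes in finitely many steps, and one bootstraps to higher $p$-adic precision using the $p$-divisibility of $T_2$. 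Closedness of $\sum_i \hat{D}_i B$ in $B_0 \cap R_{u'}$, which follows from the finite-dimensionality of the quotient established in \cite[Theorem A.1]{Ado1989}, transfers the limit relation into the required congruence and completes the proof.
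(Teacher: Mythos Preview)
Your proof of the vanishing assertion is identical to the paper's: both use the direct sum decomposition $B_0=\bigoplus_{u'}R_{u'}$ together with $\hat D_i(R_{u'})\subset R_{u'}$.

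For the valuation estimate, however, your route differs substantially from the paper's. The paper disposes of it in two lines by invoking \cite[Proposition~3.1]{Ado1989}, which furnishes directly $A\in\mathcal{O}_K$ and $\xi_i\in B_0$ with
\[
\widetilde{\gamma}^{Mw(u)}x^u = A\,\widetilde{\gamma}^{Mw(u')}x^{u'} + \sum_i \hat D_i\xi_i,
\]
whence $\ord A(u,u')=\ord(A\widetilde{\gamma}^{M(w(u')-w(u))})\ge (w(u')-w(u))/(p-1)$ immediately. You instead reconstruct that reduction explicitly: diagonalising the $\hat D_i$ via the $\mathbf{Z}_p$-linear combinations $\tilde D_k=\sum_i (J^{-1})_{ki}\hat D_i$ (note the index order should be $(J^{-1})_{ki}$ rather than $(J^{-1})_{ik}$ for $\sum_i(J^{-1})_{ki}(w_j)_i=\delta_{jk}$ to hold), you produce an integral one-step reduction $\tilde x^u\mapsto -(r_k(u)-1)\tilde x^{u-w_k}+(\text{$p$-small upward terms})$ and iterate. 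Your approach is more self-contained and makes the mechanism behind the Adolphson--Sperber estimate visible; the trade-off is that you must control the convergence of the iteration, which you only sketch. The bootstrap you outline (mod $p$ only $T_1$ acts and stabilises in finitely many steps; $T_2$ raises the $p$-adic level) is the right idea and does go through, but turning it into a rigorous proof requires checking that $T$ is continuous on $B_0$ (each coefficient of $T(\xi)$ receives only finitely many contributions, and the upward terms carry an $\ord\ge p^l-1$ matching their weight gain), and that the limit relation survives passage through $\sum_i\hat D_iB$. These are exactly the technicalities packaged into \cite[Proposition~3.1]{Ado1989}, so your argument and the paper's ultimately rest on the same analysis; the paper simply cites it while you unpack it.
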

\begin{proof}
     The first assertion follows from the facts that 
     $$
B_0=\bigoplus_{u'\in S(\Delta)}R_{u'}
     $$ and $\hat{D}_i(R_{u'})\subset R_{u'}$ for any $i$ and $u'$. Suppose that $u\in S_{u'}$. By \cite[Proposition 3.1]{Ado1989}, there exit $A\in \mathcal{O}_K$ and $\xi_1,\cdots,\xi_n\in B_0$ such that 
     $$
     \widetilde{\gamma}^{Mw(u)}x^u=A\widetilde{\gamma}^{Mw(u')}x^{u'}+\sum_{i=1}^n\hat{D}_i\xi_i.
     $$Hence, we have 
     $$
\ord(A(u,u'))=\ord(A\widetilde{\gamma}^{Mw(u')-Mw(u)})\geq \frac{w(u')-w(u)}{p-1}.
     $$
\end{proof}


\begin{Proposition}\label{est-diag-basic}
For any $u,u'\in S(\Delta)$, we have
$$\ord(\gamma(u,u'))=\left\{\begin{array}{cc}
+\infty&\hbox{if } p.u'-u\neq 0,\\
\frac{pw(u')-w(u)}{p-1}&\hbox{if }p.u'-u=0.
\end{array}
\right.$$ $\ord(\gamma(u,u'))=+\infty$ means that $\gamma(u,u')=0$.
\end{Proposition}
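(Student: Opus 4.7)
My plan is to compute $\alpha(x^u) = \psi\bigl(F_0(x)\cdot x^u\bigr)$ explicitly and then extract the $x^{u'}$-coefficient after reduction modulo $\sum_{i=1}^n \hat{D}_i B$. Writing $F_0(x)=\sum_v h_v x^v$, I get $\alpha(x^u)=\sum_W h_{pW-u}x^W$. Since $F_0=\prod_{i=1}^n\theta(x^{w_i})$ and $J$ is invertible, the decomposition $v=\sum j_i w_i$ with $j_i\in\mathbf{Z}_{\geq 0}$ is unique when it exists, so $h_v=\prod_i c_{j_i}$ in that case and $h_v=0$ otherwise. Combining with Lemma \ref{est-lemma2}, which forces $A(W,u')=0$ for $W\notin S_{u'}$, this reduces the problem to
\begin{equation*}
\gamma(u,u')=\sum_{W\in S_{u'}} h_{pW-u}\,A(W,u').
\end{equation*}

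Next I would parametrize $W=u'+\sum_i m_i w_i$ with $m_i\in\mathbf{Z}_{\geq 0}$ and write $u=\sum r_i w_i$, $u'=\sum s_i w_i$ with $r_i,s_i\in[0,1)$. Then $pW-u=\sum_i(ps_i-r_i+pm_i)w_i$, so $h_{pW-u}\neq 0$ forces $ps_i-r_i+pm_i\in\mathbf{Z}_{\geq 0}$, which, since $-1<ps_i-r_i<p$, amounts to $r_i=\{ps_i\}$ for every $i$ --- exactly the condition $p.u'=u$. When $p.u'\neq u$, every term vanishes and $\gamma(u,u')=0$, giving the first case of the statement.

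In the case $p.u'=u$, combining $\ord(c_j)\geq j/(p-1)$ with Lemma \ref{est-lemma2} yields the bound
\begin{equation*}
\ord\bigl(h_{pW-u}\,A(W,u')\bigr)\geq \frac{pw(u')-w(u)}{p-1}+|m|,
\end{equation*}
so all $|m|\geq 1$ terms have strictly larger order than the target, and the problem reduces to pinning down the $m=0$ term, which equals $h_{pu'-u}=\prod_i c_{\lfloor ps_i\rfloor}$ (using $A(u',u')=1$).

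The main obstacle is promoting the lower bound $\ord(c_{\lfloor ps_i\rfloor})\geq \lfloor ps_i\rfloor/(p-1)$ to an equality. This works because $\lfloor ps_i\rfloor\leq p-1<p$: expanding $\theta(t)=\exp(\gamma t)\cdot \prod_{i\geq 1}\exp(\gamma^{p^i}t^{p^i}/p^i)$, only the first factor contributes to the coefficient of $t^n$ for $0\leq n<p$, so $c_n=\gamma^n/n!$ with $n!\in\mathbf{Z}_p^\times$, giving $\ord(c_n)=n/(p-1)$ exactly. Multiplying, $\ord(h_{pu'-u})=\sum_i\lfloor ps_i\rfloor/(p-1)=(pw(u')-w(u))/(p-1)$, as required.
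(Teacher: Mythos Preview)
Your proposal is correct and follows essentially the same route as the paper's proof: expand $\alpha(x^u)=\sum_W h_{pW-u}x^W$, restrict to $W\in S_{u'}$ via Lemma~\ref{est-lemma2}, observe that $h_{pW-u}\neq 0$ forces $p.u'=u$, then separate the $W=u'$ term (using $A(u',u')=1$) from the higher terms and pin down $\ord(c_n)=n/(p-1)$ for $n\le p-1$ via $\theta(t)\equiv\exp(\gamma t)\bmod t^p$. The only differences are notational (your $W,m_i,s_i,r_i$ versus the paper's $v,s_i,r_i$) and that your derivation of the constraint $r_i=\{ps_i\}$ from $-1<ps_i-r_i<p$ is slightly more explicit than the paper's one-line remark that ``the above equation has no integer solution''.
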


\begin{proof}
     Note that 
     \begin{eqnarray*}
          \alpha(x^u)&=&\psi(x^uF_0(x))=\sum_{v\in M(f)}h_{pv-u}x^v
          \\&\equiv&\sum_{u'\in S(\Delta)}\sum_{v\in M(f)}h_{pv-u}A(v,u')x^{u'}\mod\sum_{i=1}^n\hat{D}_i B.
     \end{eqnarray*}
     By Lemma \ref{est-lemma2}, $A(v,u')=0$ when $v\notin S_{u'}$. Hence, we have 
\begin{equation}\label{est-diag}
\gamma(u,u')=h_{pu'-u}+\sum_{v\in M(f)-S(\Delta)}h_{pv-u}A(v,u').
\end{equation}
Assume that $v=u'+\sum\limits_{i=1}^ns_iw_i$ with $s_i\in \mathbf{Z}_{\geq 0}$. Note that
\begin{displaymath}
h_{pv-u}=\prod_{j=1}^nc_{k_j},
\end{displaymath}
where $(k_1,\dots,k_n)\in \mathbf{Z}_{\geq 0}^n$ satisfies the equation
\begin{equation}\label{one2}
\sum_{i=1}^nk_i w_i=pv-u=pu'-u+p\sum_{i=1}^ns_iw_i.
\end{equation}
If $p.u'-u\neq 0$, the above equation has no integer solution which implies that $\gamma(u,u')=0$. If $p.u'-u=0$, suppose that $pu'-u=r_1w_1+\cdots+r_nw_n$ with $r_i\in\mathbf{Z}_{\geq 0}$ for all $i$. Note that $r_i\leq p-1$ for all $i$ and $w(pu'-u)=pw(u')-w(u)=r_1+\cdots+r_n$.
By (\ref{one2}), we have $k_i=r_i+ps_i$ for each $i$. Hence, by Lemma \ref{est-lemma2} and the estimate $\ord(c_i)\geq \frac{i}{p-1}$, we have
\begin{eqnarray*}
\ord(h_{pv-u}A(v,u'))\geq \sum_{i=1}^n\frac{k_i-s_i}{p-1}
=\sum_{i=1}^ns_i+\frac{pw(u')-w(u)}{p-1}.
\end{eqnarray*}
If $v\notin S(\Delta)$, there is some $i$ such that $s_i>0$, we have 
$$
\ord(h_{pv-u}A(v,u'))>\frac{pw(u')-w(u)}{p-1}.
$$
If $v=u'\in S(\Delta)$, we have $k_i=r_i\leq p-1$ for all $i$. Note that 
$$
\theta(t)\equiv \exp(\gamma t)\mod t^p.
$$ We have $\ord(c_{i})=\ord(\frac{\gamma^i}{i!})=\frac{i}{p-1}$ for any $i\leq p-1$. Hence 
\begin{displaymath}
\ord(h_{pu'-u})=\sum_{i=1}^n\ord(c_{r_i})=\frac{1}{p-1}\sum_{i=1}^nr_i=\frac{pw(u')-w(u)}{p-1}.
\end{displaymath}
By (\ref{est-diag}), we have
\begin{displaymath}
\ord(\gamma(u,u'))=\ord(h_{pu'-u})=\frac{pw(u')-w(u)}{p-1}.
\end{displaymath}
\end{proof}

\begin{Theorem}\label{Newton-polygon-diag}
Suppose that $f=\sum_{j=1}^nx^{w_j}$ and $(p,\det J)=1$. The Newton polygon of $L(\mathbf{T}^n,f,t)^{(-1)^{n-1}}$ coincides with the Hodge polygon $HP(\Delta)$ if and only if $S(\Delta)$ is $p$-stable under weight function.
\end{Theorem}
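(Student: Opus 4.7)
The plan is to exploit Proposition \ref{est-diag-basic} to identify the matrix of $\alpha$ in the basis $S=\{x^u\}_{u\in S(\Delta)}$ as a weighted permutation matrix, and then to compare the resulting Newton polygon with $HP(\Delta)$. Since $(p,\det J)=1$, the assignment $\sigma:u'\mapsto p.u'$ is a bijection of $S(\Delta)$. By Proposition \ref{est-diag-basic}, $\gamma(u,u')=0$ unless $u=\sigma(u')$, so in each row and each column of the matrix $(\gamma(u,u'))$ there is exactly one possibly nonzero entry, and therefore
$$
L(f,t)^{(-1)^{n-1}}=\det(1-t\alpha)=\prod_{C}\Big(1-t^{|C|}c_C\Big),
$$
where the product runs over the cycles $C$ of $\sigma$ and $c_C=\prod_{u'\in C}\gamma(\sigma(u'),u')$.

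Next I would compute the $p$-adic valuation of each $c_C$. Writing $C=(u_0,u_1,\dots,u_{|C|-1})$ with $u_{i+1}=\sigma(u_i)=p.u_i$, Proposition \ref{est-diag-basic} gives $\ord\gamma(u_{i+1},u_i)=\frac{pw(u_i)-w(u_{i+1})}{p-1}$, and summing cyclically the terms telescope to
$$
\ord c_C=\sum_{u\in C}w(u).
$$
Hence the Newton polygon of the factor $1-t^{|C|}c_C$ consists of $|C|$ segments of common slope $\overline{w}(C):=\tfrac{1}{|C|}\sum_{u\in C}w(u)$, and concatenating over all cycles determines the full slope multiset of the Newton polygon of $L(f,t)^{(-1)^{n-1}}$.

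On the other hand, Lemma \ref{Hodge-index} together with the identification $H(k)=a_k=\#\{u\in S(\Delta):w(u)=k/M\}$, which follows because $\bar R/\bar I$ admits the graded basis $S$, shows that the slope multiset of $HP(\Delta)$ is exactly $\{w(u):u\in S(\Delta)\}$. Both polygons have total horizontal length $|S(\Delta)|$ and total height $\sum_{u\in S(\Delta)}w(u)$, and the Newton polygon is obtained from the Hodge one by replacing each cycle's weights with their common average. Therefore the two polygons coincide if and only if $w$ is constant on every cycle of $\sigma$, i.e.\ $w(p.u)=w(u)$ for all $u\in S(\Delta)$, which is precisely the hypothesis that $S(\Delta)$ is $p$-stable under weight function.

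The main obstacle I anticipate is the bookkeeping in the first step: one must verify carefully that $\sigma$ is indeed a bijection (this uses $(p,\det J)=1$ via the identification of $S(\Delta)$ with a finite quotient of order coprime to $p$) and that the weighted-permutation structure of $(\gamma(u,u'))$ factors $\det(1-t\alpha)$ cleanly as a product over cycles. Once those are in place, the telescoping computation of $\ord c_C$ and the final combinatorial comparison of slope multisets are routine.
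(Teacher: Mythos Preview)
Your proposal is correct and follows essentially the same route as the paper: both use Proposition~\ref{est-diag-basic} to recognize the matrix of $\alpha$ on the basis $S$ as a weighted permutation matrix for the $p$-action, factor $\det(1-t\alpha)$ over orbits, compute $\ord$ of each orbit factor by the same telescoping sum, and then compare the resulting slope data with the Hodge slopes $\{w(u)\}_{u\in S(\Delta)}$ to conclude that equality holds iff $w$ is constant on every orbit. The only cosmetic difference is that the paper phrases the last comparison via the auxiliary polynomials $f_{u,d}$ and $g_{u,d}$, whereas you phrase it in terms of averaging slope multisets; these are the same argument.
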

\begin{proof}
By \cite[Corollary 3.11]{Ado1989}, the Newton polygon of $L(\mathbf{T}^n,f,t)^{(-1)^{n-1}}$ lies above the Hodge polygon of $HP(\Delta)$ with same endpoints and the matrix $\Gamma:=(\gamma(u,u'))_{u,u'\in S(\Delta)}$ is invertible. By Proposition \ref{est-diag-basic}, $\gamma(u,u')\neq 0$ if and only if $p.u'-u=0$. 
Hence there is
exactly one non zero element in every column and row of $\Gamma$. Let $S(d,u)$ be the orbit of $u$ under the $p$-action with exactly $d$ elements. Suppose that $S(d,u)=\left\{u_1,\cdots,u_d\right\}$, where $u_i=p^{i-1}.u$. By Proposition \ref{est-diag-basic}, we have
\[\alpha(x^{u_1},\cdots,x^{u_d})=(x^{u_1},\cdots,x^{u_d})
\left(\begin{array}{ccc}
0 & \gamma_{21} & \dots \\
0 & 0 & \dots \\
\vdots & \vdots & \gamma_{dd-1} \\
\gamma_{1d} & \cdots & 0 \\
\end{array}\right)\]
where $\gamma_{ij}=\gamma(u_i,u_j)$. Thus
$$
\det(1-\alpha t)=\prod_{S(d,u)}(1-t^d\lambda_{u}),
$$
where the above product runs through all the obits of $S(\Delta)$ under the $p$-action and $\lambda_u=\gamma_{1d}\gamma_{21}\cdots\gamma_{dd-1}$ with
\begin{eqnarray*}
\ord(\lambda_u)&=&\ord(\gamma_{1d}\gamma_{21}\cdots\gamma_{dd-1})
\\&=&\frac{pw(u_d)-w(u_1)}{p-1}+\cdots+\frac{pw(u_{d-1})-w(u_d)}{p-1}
\\&=&\sum_{i=0}^{d-1}w(p^i.u).
\end{eqnarray*}
Set $f_{u,d}=1-t^d\lambda_{u'}$ and 
\begin{displaymath}
g_{u,d}=\prod_{i=0}^{d-1}(1-tp^{w(p^i.u)}).
\end{displaymath}
Note that the Newton polygon of $f_{u,d}$ always lies above the Newton polygon of $g_{u,d}$ and the Newton polygon of the polynomial $\prod_{S(d,u)}g_{u,d}$ is $HP(\Delta)$. Hence $HP(\Delta)$ coincides with the Newton polygon of $\det(1-\alpha t)$ if and only if the Newton polygons of $g_{u,d}$ and $f_{u,d}$ coincide for each $u$.

When $S(\Delta)$ is $p$-stable under weight function. We have $w(u)=w(p.u)=\cdots=w(p^{d-1}.u)$ for each $u$. Hence, the Newton polygons of $g_{u,d}$ and $f_{u,d}$ coincides for each $u$.

Conversely, if the Newton polygons of $g_{u,d}$ and $f_{u,d}$ coincide for each $u$. Since both polygons have same endpoints, we have $w(u)=w(p.u)=\cdots=w(p^{d-1}.u)$ for each $u$. Hence $S(\Delta)$ is $p$-stable under weight function.
\end{proof}

\bibliographystyle{plain}
\bibliography{ref/ref}

\end{document}